\documentclass[11pt]{amsart}

\author{Carlo Sanna}
\address{Universit\`a degli Studi di Torino\\Department of Mathematics\\Turin, Italy}
\email{carlo.sanna.dev@gmail.com}

\keywords{Sum of digits, base $b$ representation, factorial.}
\subjclass[2010]{Primary: 11A63, 05A10. Secondary: 11A25.}
\title{On the sum of digits of the factorial}

\usepackage{amsmath}
\usepackage{amssymb}
\usepackage{amsthm}
\usepackage{geometry}
\geometry{left=1.15in, right=1.15in, top=.72in, bottom=.72in}
\usepackage{color}
\usepackage{hyperref}
\usepackage{enumerate}
\hypersetup{colorlinks=true}

\newtheorem{thm}{Theorem}[section]

\newtheorem{lem}[thm]{Lemma}
\theoremstyle{definition}

\uchyph=0
                                 
\begin{document}

\begin{abstract}
Let $b \geq 2$ be an integer and denote by $s_b(m)$ the sum of the digits of the positive integer $m$ when is written in base $b$.
We prove that $s_b(n!) > C_b \log n \log \log \log n$ for each integer $n > e$, where $C_b$ is a positive constant depending only on $b$.
This improves of a factor $\log \log \log n$ a previous lower bound for $s_b(n!)$ given by Luca.
We prove also the same inequality but with $n!$ replaced by the least common multiple of $1,2,\ldots,n$.
\end{abstract}

\maketitle

\section{Introduction}

Let $b \geq 2$ be an integer and denote by $s_b(m)$ the sum of the digits of the positive integer $m$ when is written in base $b$.
Lower bounds for $s_b(m)$ when $m$ runs through the member of some special sequence of natural numbers (e.g., linear recurrence sequences \cite{Ste80} \cite{Luc00} and sequences with combinatorial meaning \cite{LS10} \cite{LS11} \cite{KL12} \cite{Luc12}) have been studied before.

In particular, Luca \cite{Luc02} showed that the inequality
\begin{equation}\label{equ:luca_bound}
s_b(n!) > c_b \log n ,
\end{equation}
holds for all the positive integers $n$, where $c_b$ is a positive constant, depending only on $b$.
He also remarked that (\ref{equ:luca_bound}) remains true if one replaces $n!$ by 
\begin{equation*}
\Lambda_n := \operatorname{lcm}(1,2,\ldots,n),
\end{equation*}
the least common multiple of $1,2,\ldots,n$.
We recall that $\Lambda_n$ has an important role in elementary proofs of the Chebyshev bounds $\pi(x) \asymp x / \log x$, for the prime counting function $\pi(x)$ \cite{Nai82}.

In this paper, we give a slight improvement of (\ref{equ:luca_bound}) by proving the following
\begin{thm}\label{thm:sb_bound}
For each integer $n > e$, it results
\begin{equation*}
s_b(n!), \, s_b(\Lambda_n) > C_b \log n \log \log \log n ,
\end{equation*}
where $C_b$ is a positive constant, depending only on $b$.
\end{thm}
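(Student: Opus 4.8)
The plan is to treat $n!$ and $\Lambda_n$ simultaneously by using only their $p$-adic valuations, so write $m$ for either of them and set $t := s_b(m)$. The starting point is that $m$ is a sum of exactly $t$ powers of $b$; grouping equal powers gives $m = \sum_{\nu=1}^{w} c_\nu b^{f_\nu}$ with distinct exponents $0 \le f_1 < \cdots < f_w = \lfloor\log_b m\rfloor =: L$, integer coefficients $1 \le c_\nu \le b-1$, and $\sum_\nu c_\nu = t$, $w \le t$. Here $\log m \asymp n\log n$ for $n!$ and $\log m \asymp n$ for $\Lambda_n$ by the prime number theorem, so $L$ is of the corresponding order; since $s_b(m)\ge w$, it suffices to bound $w$ from below. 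I would fix a prime $p\nmid b$, put $d := \operatorname{ord}_p(b)$ and $s := v_p(b^d-1)\ge 1$, and rewrite $m = \widetilde G(b^d)$, where $\widetilde G(Y) = \sum_\nu c_\nu b^{\,f_\nu - d\lfloor f_\nu/d\rfloor}\,Y^{\lfloor f_\nu/d\rfloor}$ is a polynomial with at most $w$ nonzero terms. Because $b^d \equiv 1 \pmod{p^s}$, this evaluates $\widetilde G$ at a point $\equiv 1 \pmod{p^s}$, and the whole problem becomes a bound for the $p$-adic valuation of a lacunary polynomial near the point $1$.

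The key lemma I would aim for is an inequality of the shape $v_p(m) \le (w-1)\,v_p(b^d-1) + (\text{error})$, with the error governed by $\log_p L$. The mechanism is that a polynomial with $w$ terms cannot vanish to order more than $w-1$ at a nonzero point; translating this $p$-adically via the expansion $\widetilde G(1+z) = \sum_{j\ge 0} N_j z^j$, where $N_j = \sum_\nu c_\nu b^{r_\nu}\binom{\lfloor f_\nu/d\rfloor}{j}$ and $z = b^d-1$ (so $v_p(z)=s$), one gets $v_p(m) = \min_j\bigl(js + v_p(N_j)\bigr)$ up to possible $p$-adic cancellation among the low-order coefficients. Controlling $\min_{j<w} v_p(N_j)$ calls for a Vandermonde/Cramer estimate: the matrix $\bigl(\binom{\lfloor f_\nu/d\rfloor}{j}\bigr)$ is invertible over $\mathbb{Q}$ with determinant assembled from the differences $\lfloor f_\mu/d\rfloor - \lfloor f_\nu/d\rfloor$ and the factorials $j!$, so the $N_j$ with $j<w$ cannot all be divisible by a large power of $p$ unless the exponents are $p$-adically clustered, which is limited by $L$. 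Rearranging should give $w \gg v_p(m)/\bigl(v_p(b^d-1)+\log_p L\bigr)$.

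With such a lemma in hand I would substitute $v_p(\Lambda_n) = \lfloor\log_p n\rfloor$ and $v_p(n!) = (n - s_p(n))/(p-1)$ and optimise the choice of $p$. Luca's bound $s_b(m)\gg\log n$ corresponds, morally, to a single convenient prime; the additional factor $\log\log\log n$ should arise by choosing $p$ in a window where $\operatorname{ord}_p(b)$ is as small as possible and by averaging over several such primes, the number of primes with prescribed small order being controlled by elementary estimates for $\prod_{d\le D}(b^d-1)$ together with the prime number theorem. Since only $v_p(m)$ and $\operatorname{ord}_p(b)$ enter, the same optimisation disposes of $n!$ and $\Lambda_n$ at once, which is why the two bounds in the statement coincide.

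The hard part is precisely the upper bound for $v_p(m)$ in terms of $w$. The naive ``vanishing order $\le w-1$'' estimate is wrecked by $p$-adic boosting: for example $b^N+1$ has only two nonzero digits yet is divisible by arbitrarily large powers of suitable primes once $N$ carries a high $p$-adic valuation, so no single prime can ever suffice and the degree $L$ must genuinely enter the bound. Making the Vandermonde/Newton-polygon analysis sharp enough — or, alternatively, replacing it by a $p$-adic lower bound for linear forms in logarithms applied to $\sum_\nu c_\nu b^{f_\nu}$ — while keeping the dependence on $L$ no worse than $\log_p L$, is where the precise exponent, and hence the iterated-logarithm gain over Luca, is won or lost. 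I expect this calibration, coupled with the optimisation over $p$, to be the crux of the argument.
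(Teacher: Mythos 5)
Your plan hinges entirely on the unproved ``key lemma'' bounding $v_p(m)$ for an integer with $w$ nonzero base-$b$ digits, and that is exactly where the proposal breaks down. In the strength you need it --- $v_p(m)\le (w-1)v_p(b^d-1)+O(\log_p L)$ with the error independent of $w$ --- the lemma would be a sensational result: combined with Legendre's formula $v_p(n!)=(n-s_p(n))/(p-1)\gg n$ and $L\ll n\log n$, it would give $s_b(n!)\gg_b n/\log n$, vastly beyond the $\log n\log\log\log n$ of the theorem and beyond anything known; so it is certainly not accessible by the elementary Vandermonde manipulation you sketch (already the $w=2$ case requires a lifting-the-exponent analysis of $\mathrm{ord}_{p^K}(b)$, and examples such as $81=1+8\cdot 10$ show the additive constant cannot be dropped). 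The versions one can actually prove are far weaker: your Cramer step only yields $\min_{j<w}v_p(N_j)\le v_p\bigl(\det(\tbinom{q_\nu}{j})\bigr)+O_b(1)$, and that determinant equals $\prod_{\mu<\nu}(q_\nu-q_\mu)/\prod_{j<w}j!$, whose $p$-adic valuation can be as large as $\tbinom{w}{2}\log_p L$; worse, a small $\min_j v_p(N_j)$ does not upper-bound $v_p\bigl(\sum_j N_jz^j\bigr)$ unless the minimum of $js+v_p(N_j)$ is attained uniquely, and the tail $j\ge w$ also contributes --- this is the cancellation you flag and never resolve. Even granting some usable form, the approach cannot reach $\Lambda_n$ through a single prime, since $v_p(\Lambda_n)=\lfloor\log_p n\rfloor$ is of the same order as $\log_p L$ itself, so any bound $v_p(m)\le f(w)\log_p L$ forces only $f(w)\gg 1$, i.e.\ $w=O(1)$; the remark about averaging over several primes would require a new statement about $\sum_p v_p(m)$ for lacunary $m$ that you do not formulate. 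Finally, nothing in the proposal actually manufactures the $\log\log\log n$ factor.

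The paper argues in the opposite direction: rather than bounding the $p$-adic valuation of a number with few nonzero digits, it exhibits a large modulus $m$ with $b^m-1\mid\Lambda_n\mid n!$ and concludes at once from the elementary inequality $s_b((b^m-1)q)\ge m$ (Lemma~\ref{lem:sbbm1}). The divisibility is arranged via the factorization $b^m-1=\prod_{d\mid m}\Phi_d(b)$, the bound $\Phi_d(b)\le(b+1)^{\varphi(d)}$ and the coprimality Lemma~\ref{lem:Phi_coprime}, so the only constraint is essentially $\varphi(m)\lesssim\log_{b+1}n$; the gain over Luca is then precisely the gap between $m$ and $\varphi(m)$ for $m$ built from the small primes, namely $m\gg e^{\gamma}\varphi(m)\log\log\varphi(m)$ (Lemma~\ref{lem:max_m_Q}), which with $\varphi(m)\asymp\log n$ produces the factor $\log\log\log n$. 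This mechanism has no counterpart in your plan, and no amount of calibration of the lacunarity lemma is likely to substitute for it.
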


\section{Preliminaries}

In this section, we discuss a few preliminary results needed in our proof of Theorem~\ref{thm:sb_bound}.
Let~$\varphi$ be the Euler's totient function.
We prove an asymptotic formula for the maximum of the preimage of $[1,x]$ through $\varphi$, as $x \to +\infty$.
Although the cardinality of the set $\varphi^{-1}([1,x])$ is well studied \cite{Bat72} \cite{BS90} \cite{BT98}, in the literature we have found no results about $\max(\varphi^{-1}([1,x]))$ as our next lemma.

\begin{lem}\label{lem:max_m_phi}
For each $x \geq 1$, let $m = m(x)$ be the greatest positive integer such that $\varphi(m) \leq x$.
Then $m \sim e^\gamma x \log \log x$, as $x \to +\infty$, where $\gamma$ is the Euler--Mascheroni constant.
\end{lem}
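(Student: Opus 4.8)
The plan is to prove matching upper and lower bounds, both of the shape $m(x) = (1+o(1))\, e^\gamma x \log\log x$. First I would record two elementary facts. Since $\varphi(k) > \sqrt{k}$ for all $k > 6$, the inequality $\varphi(m) \le x$ forces $m$ to be bounded in terms of $x$, so the greatest such integer exists and $m(x)$ is well defined; and since $\varphi(k) \le k$, every integer $k \le x$ satisfies $\varphi(k) \le x$, so trivially $m(x) \ge \lfloor x \rfloor$. In particular $m(x) \to +\infty$, and the lower bound $m(x) \ge \lfloor x\rfloor$ already gives $\log\log m(x) \ge (1-o(1))\log\log x$.

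For the upper bound I would invoke Landau's theorem on the minimal order of Euler's function, namely
\[
\liminf_{m \to +\infty} \frac{\varphi(m)\log\log m}{m} = e^{-\gamma}.
\]
Fix $\varepsilon > 0$. For all sufficiently large $m$ one has $\varphi(m) \ge (e^{-\gamma}-\varepsilon)\,m/\log\log m$, so from $\varphi(m(x)) \le x$ we obtain $m(x) \le (e^{-\gamma}-\varepsilon)^{-1}\, x \log\log m(x)$. Taking logarithms gives $\log m(x) = \log x + O(\log\log\log m(x))$, whence $\log m(x) \sim \log x$ and therefore $\log\log m(x) \sim \log\log x$ (using the trivial lower bound from the previous paragraph for the reverse inequality). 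Substituting back yields $m(x) \le (e^{-\gamma}-\varepsilon)^{-1}(1+o(1))\, x \log\log x$, and letting $\varepsilon \to 0$ gives $\limsup_{x\to+\infty} m(x)/(x\log\log x) \le e^\gamma$.

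For the lower bound I would exhibit an explicit $m$ with $\varphi(m) \le x$ that is as large as required. The natural candidates are products of consecutive primes, for which Mertens' theorem $\prod_{p\le y}(1-1/p)^{-1} \sim e^{\gamma}\log y$ together with $\theta(y) \sim y$ (Chebyshev/PNT) gives $m/\varphi(m) \sim e^\gamma \log y \sim e^\gamma \log\log m$ once the largest prime factor $y$ is about $\log x$. The difficulty is that the primorials jump by factors $p_{k+1}-1 \asymp \log x$, so $\varphi$ skips over wide ranges and a bare primorial need not have $\varphi$ close to $x$. I would circumvent this by letting the exponents of the small primes $2$ and $3$ vary: since $m/\varphi(m)$ depends only on the \emph{set} of prime divisors of $m$, multiplying $m$ by a $\{2,3\}$-smooth number leaves $m/\varphi(m)$ unchanged while scaling $\varphi(m)$ by that same factor. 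Thus, starting from $P = \prod_{5 \le p \le y} p$ with $y$ chosen so that $\log y \sim \log\log x$, and writing $m = 2^a 3^b P$, I would pick $a,b \ge 1$ so that $2^a 3^b$ approximates the target $T^\ast := x\, F(y)/P$ from below, where $F(y) = \prod_{p\le y}(1-1/p)^{-1}$; this pins $\varphi(m)$ into $((1-o(1))x,\, x]$.

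The crux, and the step I expect to be the main obstacle, is precisely this interpolation: I must know that the $\{2,3\}$-smooth numbers have consecutive ratios tending to $1$, equivalently that the additive gaps of $\{\, a\log 2 + b\log 3 : a,b \ge 0\,\}$ tend to $0$. This follows from the irrationality of $\log 3/\log 2$ via Weyl equidistribution of $\{\, b\log 3 \bmod \log 2 : b \ge 0\,\}$, which makes the smooth numbers multiplicatively dense on any growing range; since $y \sim \log x$ gives $P \approx x/6$ and $F(y) \approx e^\gamma \log\log x$, the target $T^\ast \approx 6\,e^\gamma \log\log x$ tends to infinity, so the relative approximation error is $o(1)$. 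Granting this, $\varphi(m) = (1-o(1))x$ and $m/\varphi(m) = (1+o(1))\,e^\gamma \log\log x$ combine to give $m(x) \ge m = (1-o(1))\,e^\gamma x \log\log x$; together with the upper bound this yields $m(x) \sim e^\gamma x \log\log x$, as claimed.
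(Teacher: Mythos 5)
Your proposal is correct and follows essentially the same route as the paper: the upper bound via Landau's minimal order of $\varphi$ together with the a priori bound $m \leq x^2$ from $\varphi(k) > \sqrt{k}$, and the lower bound by taking a primorial and filling the multiplicative gap with a $3$-smooth factor $2^a3^b$, using the fact that consecutive $3$-smooth numbers have ratio tending to $1$ (which the paper cites from P\'olya and you derive from the irrationality of $\log 3/\log 2$). The only differences are cosmetic parametrizations of the same construction.
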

\begin{proof}
Since $\varphi(n) \leq n$ for each positive integer $n$, it results $m \geq \lfloor x \rfloor$.
In particular, $m \to +\infty$ as $x \to +\infty$. 
Therefore, since the minimal order of $\varphi(n)$ is $e^{-\gamma} n /\log \log n$ (see \cite[Chapter~I.5, Theorem~4]{Ten95}), we obtain
\begin{equation*}
(e^{-\gamma} + o(1))\frac{m}{\log \log m} \leq \varphi(m) \leq x,
\end{equation*}
as $x \to +\infty$.
Now $\varphi(n) \geq \sqrt{n}$ for each integer $n \geq 7$, thus $m \leq x^2$ for $x \geq 7$.
Hence,
\begin{equation*}
m \leq (e^\gamma + o(1)) \, x \log \log m \leq (e^\gamma + o(1)) \, x \log \log (x^2) = (e^\gamma + o(1)) \, x \log \log x ,
\end{equation*}
as $x \to +\infty$.

On the other hand, let $p_1 < p_2 < \cdots$ be the sequence of all the (natural) prime numbers and let $a_1 < a_2 < \cdots$ be the sequence of all the $3$-smooth numbers, i.e., the natural numbers of the form $2^a 3^b$, for some integers $a,b \geq 0$.
Moreover, let $s = s(x)$ be the greatest positive integer such that
\begin{equation*}
(p_1 - 1)\cdots(p_s - 1) \leq \sqrt{x} ,
\end{equation*}
and let $t = t(x)$ be the greatest positive integer such that
\begin{equation*}
a_t (p_1 - 1)\cdots(p_s - 1) \leq x .
\end{equation*}
Note that $s, t \to +\infty$ as $x \to +\infty$.
Now we have (see \cite[Chapter~I.1, Theorem~4]{Ten95})
\begin{equation*}
\sqrt{x} < (p_1 - 1)\cdots(p_{s+1} - 1) < p_1 \cdots p_{s+1} \leq 4^{p_{s+1}} ,
\end{equation*}
hence
\begin{equation}\label{equ:ps_bound}
p_s > \tfrac1{2}p_{s+1} > \tfrac1{4\log 4} \log x ,
\end{equation}
from Bertrand's postulate.
Put $m^\prime := a_t p_1 \cdots p_s$, so that for $s \geq 2$ we get
\begin{equation*}
\varphi(m^\prime) = a_t (p_1 - 1) \cdots (p_s - 1) \leq x,
\end{equation*}
and thus $m \geq m^\prime$.
By a result of P\'olya \cite{Pol18}, $a_t / a_{t+1} \to 1$ as $t \to +\infty$.
Therefore, from our hypothesis on $s$ and $t$, Mertens' formula \cite[Chapter~I.1, Theorem~11]{Ten95} and (\ref{equ:ps_bound}) it follows that
\begin{align*}
m \geq m^\prime &= \frac{a_t}{a_{t+1}} \cdot a_{t+1} \prod_{i=1}^s (p_i-1) \cdot \prod_{i=1}^s \left(1 - \frac1{p_i}\right)^{-1} > (1 + o(1)) \cdot x \cdot \frac{\log p_s}{e^{-\gamma} + o(1)} \\
&> (e^\gamma + o(1)) \,x \log \log x ,
\end{align*}
as $x \to +\infty$.
\end{proof}

Actually, we do not make use of Lemma~\ref{lem:max_m_phi}.
We need more control on the factorization of a ``large'' positive integer $m$ such that $\varphi(m) \leq x$, 
even at the cost of having only a lower bound for $m$ and not an asymptotic formula.

\begin{lem}\label{lem:max_m_Q}
For each $x \geq 1$ there exists a positive integer $m = m(x)$ such that: $\varphi(m) \leq x$; $m = 2^t Q$, where $t$ is a nonnegative integer and $Q$ is an odd squarefree number; and
\begin{equation*}
m \geq (\tfrac1{2} e^\gamma + o(1))\, x \log \log x , 
\end{equation*}
as $x \to +\infty$.
\end{lem}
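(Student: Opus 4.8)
The plan is to adapt the lower-bound construction from the proof of Lemma~\ref{lem:max_m_phi}, replacing the $3$-smooth factor $a_t$ (which is what spoils both the squarefreeness of the odd part and the exact power of $2$) by a single power of $2$ that is used only to fill up the remaining budget in $\varphi$. Concretely, letting $p_1 < p_2 < \cdots$ be the primes, I would define $s = s(x)$ to be the largest integer with $\prod_{i=1}^{s}(p_i - 1) \leq x$; since $p_1 - 1 = 1$, this is the same as $\prod_{i=2}^{s}(p_i - 1) \leq x$. I then set $Q := p_2 p_3 \cdots p_s$, which is odd and squarefree, so that $\varphi(Q) = \prod_{i=2}^{s}(p_i - 1) \leq x$. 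Finally I let $t = t(x)$ be the largest integer with $2^{t-1}\varphi(Q) \leq x$; because $\varphi(Q) \leq x$ one has $t \geq 1$, and I put $m := 2^{t} Q$. By construction $\varphi(m) = 2^{t-1}\varphi(Q) \leq x$ and $m = 2^{t} Q$ has the required shape, so it remains only to prove the lower bound on $m$.

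For the size of $m$, the maximality of $t$ gives $2^{t-1} > x / \bigl(2\varphi(Q)\bigr)$, whence
\begin{equation*}
m = 2 \cdot 2^{t-1} Q > \frac{x}{\varphi(Q)}\, Q = x \prod_{i=2}^{s}\frac{p_i}{p_i - 1} = x \prod_{i=2}^{s}\left(1 - \frac1{p_i}\right)^{-1}.
\end{equation*}
Removing the $i = 1$ factor $(1 - \tfrac1{2})^{-1} = 2$ from Mertens' formula \cite[Chapter~I.1, Theorem~11]{Ten95}, I obtain $\prod_{i=2}^{s}(1 - 1/p_i)^{-1} \sim \tfrac1{2} e^{\gamma}\log p_s$, and therefore $m > \bigl(\tfrac1{2}e^{\gamma} + o(1)\bigr)\, x\log p_s$ as $x \to +\infty$.

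It then suffices to check that $\log p_s \geq (1 + o(1))\log\log x$. This would follow exactly as in (\ref{equ:ps_bound}): by maximality of $s$ and the Chebyshev-type bound of \cite[Chapter~I.1, Theorem~4]{Ten95} one has $x < \prod_{i=1}^{s+1}(p_i - 1) < \prod_{i=1}^{s+1} p_i \leq 4^{p_{s+1}}$, so $p_{s+1} > (\log x)/\log 4$, and Bertrand's postulate yields $p_s > \tfrac1{2}p_{s+1} > (\log x)/(2\log 4)$. Taking logarithms gives $\log p_s > \log\log x - \log(2\log 4) = (1 + o(1))\log\log x$, and combining this with the previous paragraph finishes the proof.

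The construction and the estimates are routine once the right object is chosen; the only genuinely delicate point is the bookkeeping that shows the power of $2$ can be inserted \emph{without any further loss} in the constant. Indeed the filling step only guarantees $2^{t-1} > x/(2\varphi(Q))$, so a priori one might fear an extra factor $\tfrac1{2}$; but this is cancelled exactly by writing $2^{t} = 2 \cdot 2^{t-1}$, so that the sole loss relative to Lemma~\ref{lem:max_m_phi} is the factor $\tfrac1{2}$ coming from discarding the Euler factor at the prime $2$ in Mertens' product. Verifying this cancellation, and confirming that choosing $s$ maximal with respect to $x$ (rather than to $\sqrt{x}$ as in the previous lemma) still forces $\log p_s \sim \log\log x$, is what I would treat most carefully.
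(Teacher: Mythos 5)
Your proof is correct and is essentially the paper's own argument: the paper merely instructs the reader to rerun the second half of the proof of Lemma~\ref{lem:max_m_phi} with $a_k = 2^{k-1}$, and your construction ($m = 2^t Q$ with $Q = p_2 \cdots p_s$ and $t$ maximal) is exactly what that yields, the factor $\tfrac{1}{2}$ appearing there as $a_t / a_{t+1}$ and here as the discarded Euler factor at the prime $2$ --- the same computation written two ways. The only cosmetic deviation is that you choose $s$ maximal with respect to $x$ rather than $\sqrt{x}$, which only changes an irrelevant constant in the lower bound for $p_s$.
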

\begin{proof}
The proof proceeds as the second part of the proof of Lemma~\ref{lem:max_m_phi}, but with $a_k := 2^{k-1}$ for each positive integer $k$.
So instead of $a_t / a_{t + 1} \to 1$, as $t \to +\infty$, we have $a_t / a_{t + 1} = 1/2$ for each $t$.
We leave the remaining details to the reader.
\end{proof}

To study $\Lambda_n$ is useful to consider the positive integers as a poset ordered by the divisibility relation $\mid$.
Thus, obviously, $\Lambda_n$ is a monotone nondecreasing function, i.e., $\Lambda_m \mid \Lambda_n$ for each positive integers $m \leq n$.
The next lemma says that $\Lambda_n$ is also super-multiplicative.

\begin{lem}\label{lem:L_div}
We have $\Lambda_m \Lambda_n \mid \Lambda_{mn}$, for any positive integers $m$ and $n$.
\end{lem}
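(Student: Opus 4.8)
The plan is to verify the divisibility relation one prime at a time, i.e.\ to show that $v_p(\Lambda_m) + v_p(\Lambda_n) \le v_p(\Lambda_{mn})$ for every prime $p$, where $v_p(\cdot)$ denotes the $p$-adic valuation. Since $m_1 \mid m_2$ holds precisely when $v_p(m_1) \le v_p(m_2)$ for all primes $p$, and since $v_p(\Lambda_m \Lambda_n) = v_p(\Lambda_m) + v_p(\Lambda_n)$, this reduces the lemma to a family of inequalities between nonnegative integers.

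First I would compute the $p$-adic valuation of $\Lambda_n$. For a prime $p$ and an integer $k$ with $1 \le k \le n$, the condition $p^e \mid k$ forces $p^e \le n$, so $e \le \log_p n$; conversely the integer $k = p^{\lfloor \log_p n \rfloor}$ satisfies $k \le n$ and attains this bound. As $\Lambda_n = \operatorname{lcm}(1,2,\ldots,n)$ has valuation $v_p(\Lambda_n) = \max_{1 \le k \le n} v_p(k)$, this yields the clean formula $v_p(\Lambda_n) = \lfloor \log_p n \rfloor$.

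With this formula in hand, and using $\log_p(mn) = \log_p m + \log_p n$, the desired inequality becomes $\lfloor \log_p m \rfloor + \lfloor \log_p n \rfloor \le \lfloor \log_p m + \log_p n \rfloor$. This is an instance of the elementary super-additivity of the floor function, namely $\lfloor a \rfloor + \lfloor b \rfloor \le \lfloor a + b \rfloor$ for all real $a, b$, which holds because the left-hand side is an integer not exceeding $a + b$ and $\lfloor a+b \rfloor$ is the greatest such integer. Applying it with $a = \log_p m$ and $b = \log_p n$ closes the argument for each $p$, and hence proves the lemma.

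I do not expect a genuine obstacle here; the only point requiring care is the justification of the valuation formula $v_p(\Lambda_n) = \lfloor \log_p n \rfloor$, after which the claim is a purely formal consequence of the integrality of $\lfloor a \rfloor + \lfloor b \rfloor$. In particular, unlike the earlier lemmas, no input about the distribution of primes or the behaviour of $\varphi$ is needed.
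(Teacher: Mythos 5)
Your proof is correct and follows essentially the same route as the paper: both reduce the claim to the prime factorization $\Lambda_n = \prod_{p \le n} p^{\lfloor \log_p n\rfloor}$ and then invoke the superadditivity $\lfloor \log_p m\rfloor + \lfloor \log_p n\rfloor \le \lfloor \log_p mn\rfloor$ of the floor function. Your write-up merely makes explicit the justification of the valuation formula, which the paper leaves as ``an easy exercise.''
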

\begin{proof}
It is an easy exercise to prove that
\begin{equation*}
\Lambda_n = \prod_{p \,\leq\, n} p^{\lfloor \log_p n\rfloor} ,
\end{equation*}
for each positive integer $n$, where $p$ runs over all the prime numbers not exceeding $n$.
Therefore, the claim follows since
\begin{equation*}
\lfloor \log_p m \rfloor + \lfloor \log_p n\rfloor \leq \lfloor \log_p m + \log_p n\rfloor = \lfloor \log_p mn\rfloor ,
\end{equation*}
for each prime number $p$.
\end{proof}

We recall some basic facts about cyclotomic polynomials.
For each positive integer $n$, the $n$-th cyclotomic polynomial $\Phi_n(x)$ is defined by
\begin{equation*}
\Phi_n(x) := \prod_{\substack{1 \,\leq\, k\, \leq\, n \\ \gcd(k,n) = 1}} \left(x - e^{2\pi i k / n}\right) .
\end{equation*}
It results that $\Phi_n(x)$ is a polynomial with integer coefficients and that it is irreducible over the rationals, with degree $\varphi(n)$.
We have the polynomial identity
\begin{equation*}
x^n - 1 = \prod_{d \,\mid\, n} \Phi_d(x) ,
\end{equation*}
where $d$ runs over all the positive divisors of $n$.
Moreover, it holds $\Phi_n(a) \leq (a + 1)^{\varphi(n)}$, for all $a \geq 0$.
The next lemma regards when $\Phi_m(a)$ and $\Phi_n(a)$ are not coprime.

\begin{lem}\label{lem:Phi_coprime}
Suppose that $\gcd(\Phi_m(a), \Phi_n(a)) > 1$ for some integers $m,n,a \geq 1$.
Then $m / n$ is a prime power, i.e., $m / n = p^k$ for a prime number $p$ and an integer $k$.
\end{lem}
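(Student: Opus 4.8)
The plan is to reduce everything to understanding, for a single prime $q$, exactly which cyclotomic values $\Phi_N(a)$ it can divide. Suppose $\gcd(\Phi_m(a),\Phi_n(a)) > 1$ and fix a prime $q$ dividing this gcd. Since $\Phi_N(a)$ divides $a^N - 1$ (from the identity $a^N - 1 = \prod_{d \mid N}\Phi_d(a)$), we get $q \mid a^m - 1$ and $q \mid a^n - 1$; in particular $q \nmid a$, so $a$ is invertible modulo $q$ and we may set $d := \operatorname{ord}_q(a)$, the multiplicative order of $a$ in $\mathbb{F}_q^\ast$. The whole lemma will follow from the claim that whenever $q \mid \Phi_N(a)$ one has $N = d\, q^e$ for some integer $e \geq 0$, i.e.\ $N/d$ is a power of $q$.

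Granting this claim, I would finish immediately: applying it to $N = m$ and to $N = n$ gives $m = d\,q^i$ and $n = d\,q^j$ for nonnegative integers $i,j$, whence $m/n = q^{\,i-j}$ is a power of the prime $q$, which is exactly the asserted conclusion (with $k = i - j$, possibly $\leq 0$).

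To prove the claim I would split according to whether $q \mid N$. First suppose $q \nmid N$. Then $x^N - 1$ is separable modulo $q$ (its derivative $N x^{N-1}$ is nonzero and coprime to it in $\mathbb{F}_q[x]$), so reducing the identity $x^N - 1 = \prod_{e \mid N}\Phi_e(x)$ modulo $q$ and factoring over $\overline{\mathbb{F}}_q$ shows that $\Phi_e(x) \bmod q$ is precisely the product of the $(x-\zeta)$ over the elements $\zeta$ of multiplicative order $e$. Hence a root of $\Phi_e \bmod q$ lying in $\mathbb{F}_q$ has order exactly $e$; since $a$ is a root of $\Phi_N \bmod q$ and has order $d$, this forces $N = d$, i.e.\ $N/d = q^0$. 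Next suppose $q \mid N$ and write $N = q^s u$ with $s \geq 1$ and $q \nmid u$. Using the standard identities $\Phi_{q^s u}(x) = \Phi_{qu}\!\left(x^{q^{s-1}}\right)$ and, for $q \nmid u$, $\Phi_{qu}(x) = \Phi_u(x^q)/\Phi_u(x)$, together with the Frobenius congruence $\Phi_u(x^q) \equiv \Phi_u(x)^q \pmod q$, one finds $\Phi_{N}(a) \equiv \Phi_u(a)^{\,q^{s-1}(q-1)} \pmod q$. Therefore $q \mid \Phi_N(a)$ if and only if $q \mid \Phi_u(a)$, and since $q \nmid u$ the previous case yields $u = d$; thus $N = q^s d$ and again $N/d = q^s$ is a power of $q$.

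The delicate point, and the one I would write out most carefully, is the separable case $q \nmid N$: it is here that one must justify that reduction modulo $q$ does not merge the factors $\Phi_e \bmod q$, so that membership of $a$ in the zero set of $\Phi_N \bmod q$ pins down the order of $a$ to be exactly $N$. Everything else is formal manipulation of cyclotomic identities and the Frobenius endomorphism of $\mathbb{F}_q[x]$. One should also note the harmless edge behaviour $q = 2$, where $a$ odd forces $d = 1$ and the claim reads $N = 2^e$, consistent with the argument above.
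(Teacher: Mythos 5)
Your proof is correct. Note that the paper does not actually prove this lemma: its ``proof'' is a one-line citation of \cite{Ge08}, so you are supplying the argument that the paper outsources. The mathematics you use --- fixing a common prime divisor $q$, setting $d=\operatorname{ord}_q(a)$, and showing that $q\mid\Phi_N(a)$ forces $N=dq^e$ --- is precisely the standard content behind the cited result, and your case split ($q\nmid N$ via separability of $x^N-1$ over $\mathbb{F}_q$; $q\mid N$ via $\Phi_{q^su}(x)\equiv\Phi_u(x)^{q^{s-1}(q-1)}\pmod q$) closes the argument, giving $m=dq^i$, $n=dq^j$ and hence $m/n=q^{i-j}$, which matches the lemma's phrasing (the exponent $k$ may be zero or negative). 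If you write it out in full, two points deserve a line each: (i) in the separable case, the fact that the roots of $\Phi_e\bmod q$ are exactly the elements of order $e$ requires a short induction over the divisors of $N$ combined with the degree count $\deg\Phi_e=\varphi(e)=\#\{\zeta:\operatorname{ord}(\zeta)=e\}$, since a priori reduction mod $q$ could redistribute roots among the factors; (ii) the congruence $\Phi_{qu}\equiv\Phi_u^{\,q-1}\pmod q$ should be justified from $\Phi_{qu}\Phi_u=\Phi_u(x^q)\equiv\Phi_u^{\,q}\pmod q$, using that the division is exact in $\mathbb{Z}[x]$ and that $\mathbb{F}_q[x]$ is an integral domain. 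With those details filled in, your argument is a complete and self-contained replacement for the citation.
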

\begin{proof}
See \cite[Theorem~7]{Ge08}.
\end{proof}

Finally, we state an useful lower bound for the sum of digits of the multiples of $b^m - 1$.
\begin{lem}\label{lem:sbbm1}
For each positive integers $m$ and $q$ it results $s_b((b^m - 1)q) \geq m$.
\end{lem}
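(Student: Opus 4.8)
The plan is to exploit the identity $(b^m-1)q = b^m q - q$ and to analyze the base-$b$ subtraction $b^m q - q$ carry by carry. First, since multiplying by $b$ merely appends a zero digit, we have $s_b(bx) = s_b(x)$ for every positive integer $x$; writing $q = b^a q^\prime$ with $b \nmid q^\prime$ we get $s_b((b^m-1)q) = s_b((b^m-1)q^\prime)$, so we may assume without loss of generality that $b \nmid q$, i.e. that the least significant base-$b$ digit $c_0$ of $q$ is nonzero.

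Next I would invoke the elementary borrow-counting identity for subtraction: if $A \ge C \ge 0$ and the computation of $A - C$ in base $b$ uses $\beta$ borrows, then $s_b(A - C) = s_b(A) - s_b(C) + (b-1)\beta$, each borrow transferring one unit out of a higher position while contributing $b$ to a lower one. Applying this with $A = b^m q$ and $C = q$, and using $s_b(b^m q) = s_b(q)$, the two digit sums cancel and we obtain the clean formula $s_b((b^m-1)q) = (b-1)\beta$. Thus it suffices to prove $\beta \ge m$.

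Finally I would count the borrows in the low block. Because $b^m q$ is $q$ shifted up by $m$ places, its digits in positions $0,1,\ldots,m-1$ all vanish, whereas the digit $c_0$ of $q$ in position $0$ is nonzero after our reduction. Subtracting a nonzero quantity from $0$ forces a borrow out of position $0$; and a borrow entering a position whose minuend digit is $0$ again forces a borrow out of it. Hence the borrow propagates through every one of the positions $0,1,\ldots,m-1$, producing at least $m$ borrows, so $\beta \ge m$ and therefore $s_b((b^m-1)q) = (b-1)\beta \ge (b-1)m \ge m$, which is even slightly stronger than required.

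The only genuinely delicate point is the borrow-counting identity together with the propagation step: one must verify that a borrow is forced at position $0$ and is never interrupted across the $m$ vanishing positions of $b^m q$. An alternative route avoiding explicit subtraction is to group the base-$b$ digits of $N=(b^m-1)q$ into blocks of length $m$ and use $b^m \equiv 1 \pmod{b^m-1}$ to fold $N$ into a positive multiple of $b^m-1$ supported on the positions $0,\ldots,m-1$; but converting that congruence into the bound $s_b(N)\ge m$ appears to need essentially the same carry analysis, so I would favour the direct argument above.
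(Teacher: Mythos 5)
Your proof is correct. Note, however, that the paper does not actually prove this lemma---it simply cites Lemma~1 of Balog and Dartyge [BD12]---so your argument is a self-contained replacement rather than a variant of the paper's reasoning. Your two key ingredients both check out: the reduction to $b \nmid q$ via $s_b(bx)=s_b(x)$ is harmless, and the borrow-counting identity $s_b(A-C)=s_b(A)-s_b(C)+(b-1)\beta$ is the subtraction form of the standard carry identity $s_b(D)+s_b(C)=s_b(D+C)+(b-1)\kappa$ applied to $D=A-C$, with borrows of the subtraction corresponding to carries of the addition. With $A=b^mq$ and $C=q$ the digit sums cancel, and the propagation step is sound: position $0$ has minuend digit $0$ and subtrahend digit $c_0\geq 1$, forcing a borrow, and each of positions $1,\dots,m-1$ has minuend digit $0$ and an incoming borrow, so $0-c_i-1<0$ forces the borrow onward; hence $\beta\geq m$ and $s_b((b^m-1)q)=(b-1)\beta\geq (b-1)m\geq m$. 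Your argument in fact yields the stronger bound $(b-1)m$, which is more than the lemma as stated (and more than the application in Section~3 requires, where only $s_b(n!)\geq m$ is used). The alternative ``folding'' route you sketch (grouping digits into blocks of length $m$ and using $b^m\equiv 1 \pmod{b^m-1}$ together with subadditivity of $s_b$ under addition) also works and is essentially the classical proof; your assessment that it rests on the same carry bookkeeping is accurate, so the direct subtraction argument is a reasonable choice.
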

\begin{proof}
See \cite[Lemma~1]{BD12}.
\end{proof}

\section{Proof of Theorem~\ref{thm:sb_bound}}

Without loss of generality, we can assume $n$ sufficiently large.
Put $x := \tfrac1{8}\log_{b+1} n \geq 1$.
Thanks to Lemma \ref{lem:max_m_Q}, we know that there exists a positive integer $m$ such that $\varphi(m) \leq x$ and
\begin{equation}\label{equ:m_bound}
m > \tfrac1{3} e^\gamma x \log \log x > C_b \log n \log \log \log n ,
\end{equation}
where $C_b > 0$ is a constant depending only on $b$.
Precisely, we can assume that $m = 2^t Q$, where $t$ is a nonnegative integer and $Q$ is an odd squarefree number.
Fix a nonnegative integer $j \leq t$.
For each positive divisor $d$ of $Q$, we have $\varphi(2^{t - j}d) \mid \varphi(m / 2^j)$ and so, a fortiori, $\varphi(2^{t - j}d) \leq \varphi(m / 2^j)$.
Therefore,
\begin{equation}\label{equ:Phi_d_bound}
\Phi_{2^{t-j}d}(b) \leq (b + 1)^{\varphi(2^{t-j}d)} \leq (b + 1)^{\varphi(m / 2^j)} \leq (b + 1)^{\varphi(m) / 2^{j-1}} \leq n^{1 / 2^{j+2}} .
\end{equation}
Let $\mu$ be the M\"obius function.
Now from (\ref{equ:Phi_d_bound}) and Lemma~\ref{lem:Phi_coprime} we have that the $\Phi_{2^{t-j}d}(b)$'s, where $d$ runs over the positive divisors of $Q$ such that $\mu(d) = 1$,  are pairwise coprime and not exceeding $n^{1 / 2^{j+2}}$, thus
\begin{equation}\label{equ:Phi_div}
\prod_{\substack{d \, \mid \, Q \\ \mu(d) \,=\, 1}} \Phi_{2^{t-j}d}(b) = \operatorname{lcm}\{\Phi_{2^{t-j}d}(b) : d \mid Q, \; \mu(d) = 1\} \mid \Lambda_{\lfloor n^{1 / 2^{j+2}} \rfloor} .
\end{equation}
Similarly, the same result holds for the divisors $d$ such that $\mu(d) = -1$.
Clearly, we have
\begin{equation*}
b^m - 1 = \prod_{d \,\mid\, m} \Phi_{d}(b) = \prod_{\substack{0 \,\leq\, j \,\leq\, t \\ r \, \in \, \{-1,+1\}}} \prod_{\substack{d \, \mid \, Q \\ \mu(d) \,=\, r}} \Phi_{2^{t-j}d}(b) .
\end{equation*}
Moreover,
\begin{equation*}
\left(\prod_{0 \,\leq\, j \,\leq\, t} \lfloor n^{1/2^{j+2}}\rfloor \right)^{\!\!2} \leq \prod_{0 \,\leq\, j \,\leq\, t} n^{1/2^{j+1}} \leq n .
\end{equation*}
As a consequence, from (\ref{equ:Phi_div}) and Lemma~\ref{lem:L_div}, we obtain
\begin{equation*}
b^m - 1 \mid \left(\prod_{0 \,\leq\, j \,\leq\, t} \Lambda_{\lfloor n^{1/2^{j+2}}\rfloor}\right)^{\!\!2} \mid \Lambda_n .
\end{equation*}
Thus $b^m - 1 \mid \Lambda_n$ and also $b^m - 1 \mid n!$, since obviously $\Lambda_n \mid n!$.
In conclusion, from Lemma~\ref{lem:sbbm1} and (\ref{equ:m_bound}), we get
\begin{equation*}
s_b(\Lambda_n),\, s_b(n!) \geq m > C_b \log n \log \log \log n,
\end{equation*}
which is our claim, this completes the proof.

\subsection*{Acknowledgements}

The author thanks Paul Pollack (University of Georgia) for a suggestion that has lead to the exact asymptotic formula of Lemma~\ref{lem:max_m_phi}.

\bibliographystyle{amsalpha}

\begin{thebibliography}{Ten95}

\bibitem[Bat72]{Bat72}
P.~Bateman, \emph{The distribution of values of the {E}uler function}, Acta
  Arith. \textbf{21} (1972), 329--345.

\bibitem[BD12]{BD12}
A.~Balog and C.~Dartyge, \emph{On the sum of the digits of multiples}, Moscow
  J. Comb. Number Theory \textbf{2} (2012), 3--15.

\bibitem[BS90]{BS90}
M.~Balazard and A.~Smati, \emph{Elementary proof of a theorem of {B}ateman},
  Progr. Math. \textbf{85} (1990), 41--46.

\bibitem[BT98]{BT98}
M.~Balazard and G.~Tenenbaum, \emph{Sur la r{\'e}partition des valeurs de la
  fonction d'{E}uler}, Compos. Math. \textbf{110} (1998), 239--250.

\bibitem[Ge08]{Ge08}
Y.~Ge, \emph{Elementary properties of cyclotomic polynomials}, Mathematical
  Reflections \textbf{2} (2008).

\bibitem[KL12]{KL12}
A.~Knopfmacher and F.~Luca, \emph{Digit sums of binomial sums}, J. Number
  Theory \textbf{132} (2012), 324--331.

\bibitem[LS10]{LS10}
F.~Luca and I.~E. Shparlinski, \emph{On the $g$-ary expansions of {A}p{\'e}ry,
  {M}otzkin, {S}chr{\"o}der and other combinatorial numbers}, Ann. Comb.
  \textbf{14} (2010), 507--524.

\bibitem[LS11]{LS11}
\bysame, \emph{On the {$g$}-ary expansions of middle binomial coefficients and
  {C}atalan numbers}, Rocky Mountain J. Math. \textbf{41} (2011), 1291--1301.

\bibitem[Luc00]{Luc00}
F.~Luca, \emph{Distinct digits in base b expansions of linear recurrence
  sequences}, Quaest. Math. \textbf{23} (2000), 389--404.

\bibitem[Luc02]{Luc02}
\bysame, \emph{The number of non-zero digits of {$n!$}}, Canad. Math. Bull.
  \textbf{45} (2002), 115--118.

\bibitem[Luc12]{Luc12}
\bysame, \emph{On the number of nonzero digits of the partition function},
  Arch. Math. \textbf{98} (2012), 235--240.

\bibitem[Nai82]{Nai82}
M.~Nair, \emph{On {C}hebyshev-type inequalities for primes}, Amer. Math.
  Monthly \textbf{89} (1982), 126--129.

\bibitem[P{\'o}l18]{Pol18}
G.~P{\'o}lya, \emph{Zur arithmetischen untersuchung der polynome}, Math. Z.
  \textbf{1} (1918), 143--148.

\bibitem[Ste80]{Ste80}
C.~L. Stewart, \emph{On the representation of an integer in two different
  bases}, J. Reine Angew. Math. \textbf{319} (1980), 63--72.

\bibitem[Ten95]{Ten95}
G.~Tenenbaum, \emph{Introduction to analytic and probabilistic number theory},
  Cambridge Studies in Advanced Mathematics, vol.~46, Cambridge University
  Press, Cambridge, 1995.

\end{thebibliography}
\providecommand{\bysame}{\leavevmode\hbox to3em{\hrulefill}\thinspace}
\providecommand{\MR}{\relax\ifhmode\unskip\space\fi MR }

\providecommand{\href}[2]{#2}

\end{document}